\newtheorem{theorem}{Theorem}
\theoremstyle{definition}
\newtheorem{question}{Question}
\theoremstyle{remark}
\title{Improved lower bounds on the extrema of eigenvalues of graphs}
\author{William Linz\thanks{University of South Carolina, Columbia, SC. ({\tt wlinz@mailbox.sc.edu}). Partially supported by NSF RTG Grant DMS 2038080}}
\date{\today}
\begin{document}
\maketitle
\begin{abstract}
In this note, we improve the lower bounds for the maximum size of the $k$th largest eigenvalue of the adjacency matrix of a graph for several values of $k$. In particular, we show that closed blowups of the icosahedral graph improve the lower bound for the maximum size of the fourth largest eigenvalue of a graph, answering a question of Nikiforov.
\end{abstract}

\section{Introduction}

How large can the $k$th largest eigenvalue of a graph $G$ on $n$ vertices be? The graphs $k K_{\frac{n}{k}}$ show that the $k$th largest eigenvalue can be at least $\frac{n}{k} - 1$ (we assume $n$ is a multiple of $k$ here for simplicity). Can this easy lower bound be improved? 

To fix notation, for a graph $G$ on $n$ vertices, we denote the eigenvalues of the adjacency matrix of $G$ by $\lambda_1 \ge \lambda_2 \ge \cdots \ge\lambda_n$. Following Nikiforov~\cite{Nik1}, we define $\lambda_k(n) = \max_{|V(G)| = n} \lambda_k(G)$ and $c_k = \sup\{\lambda_k(G)/n: |V(G)| = n, n\ge k\}$. In fact, Nikiforov shows $c_k = \lim_{n\rightarrow \infty} \lambda_{k}(n)/n$, by methods introduced in \cite{Nik2}. 

The question of providing good upper and lower bounds on the $k$th largest eigenvalue $\lambda_k$ of a graph was apparently first stated by Hong~\cite{Hon}. Nikiforov was able to prove the following bounds on $c_k$. 

\begin{theorem}[Nikiforov~\cite{Nik1}]\label{thm:nik}
Let $k \ge 2$. Then, 
\[c_k \le \frac{1}{2\sqrt{k-1}}.\]
Furthermore, there exists an integer $k_0$ such that for any $k > k_0$, 
\[c_k \ge \frac{1}{2\sqrt{k-1} + \sqrt[3]{k}}.\]
\end{theorem}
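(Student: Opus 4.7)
I would tackle the upper and lower bounds separately. For the upper bound $c_k \le 1/(2\sqrt{k-1})$, the natural first step uses the trace identities $\sum_i \lambda_i = \operatorname{tr}(A) = 0$ and $\sum_i \lambda_i^2 = 2|E(G)| \le n(n-1)$. Setting $s := \lambda_k(G) > 0$, the inequalities $\lambda_1, \dots, \lambda_k \ge s$ give $\sum_{i \le k} \lambda_i \ge ks$ and hence $\sum_{i > k} \lambda_i \le -ks$. Cauchy--Schwarz then yields $\sum_{i > k} \lambda_i^2 \ge k^2 s^2/(n-k)$, which combined with $\sum_{i \le k}\lambda_i^2 \ge ks^2$ gives $s^2 \cdot kn/(n-k) \le n(n-1)$, so $s/n \to 1/\sqrt{k}$ asymptotically---off from the target by a factor of $2$. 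To close this gap, I would treat $\lambda_1$ separately from $\lambda_2,\dots,\lambda_k$ (since in extremal graphs $\lambda_1$ is typically much larger than $\lambda_k$), or else apply Cauchy interlacing $\lambda_k(G) \le \lambda_1(G[S])$ for an $(n{-}k{+}1)$-vertex induced subgraph $S$ together with an averaging argument over choices of $S$.

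For the lower bound, the trivial family $kK_{n/k}$ only gives $c_k \ge 1/k - o(1)$, which is an order of magnitude weaker than the target. The plan is a balanced blow-up: replace each vertex of a carefully chosen small graph $H$ on $m$ vertices by an independent set of size $n/m$. The nontrivial eigenvalues of the blow-up $G$ are exactly $(n/m)\lambda_i(H)$ (since the adjacency matrix is $A(H) \otimes J_{n/m}$), so $\lambda_k(G)/n = \lambda_k(H)/m$, reducing the problem to finding a small $H$ with $\lambda_k(H)/|V(H)|$ close to $1/(2\sqrt{k-1})$. For large $k$, a natural candidate is a random $d$-regular graph on $m$ vertices: by Friedman's theorem the nontrivial eigenvalues concentrate near $\pm 2\sqrt{d-1}$, and choosing $d$ so that $2\sqrt{d-1}/m \approx 1/(2\sqrt{k-1})$ would place $\lambda_k(H)/m$ in the desired range. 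The extra $\sqrt[3]{k}$ term in the denominator looks like the lower-order error one would inherit from the deviation between the Friedman bound and the exact Ramanujan value.

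The main obstacles are the following. For the upper bound, closing the factor-of-two gap between the naive trace bound $1/\sqrt{k}$ and the target $1/(2\sqrt{k-1})$ requires leveraging actual graph structure (symmetry, $\{0,1\}$-valued entries, interlacing) beyond the bare trace identities, and I expect this to be the harder direction. For the lower bound, the hypothesis $k > k_0$ strongly suggests an asymptotic probabilistic construction rather than an explicit family, and the crux is showing that $H$'s spectrum is tight enough to lose only $\sqrt[3]{k}$ in the denominator---a margin that rules out the cruder disjoint-union and complete-multipartite constructions one tries first.
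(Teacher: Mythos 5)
The paper does not prove this theorem; it is quoted from Nikiforov~\cite{Nik1}, so your proposal has to stand on its own. For the upper bound your strategic instinct (treat $\lambda_1$ separately) is correct, but the ingredient you are missing is neither interlacing nor ``graph structure'' in any subtle sense: it is the elementary inequality $\lambda_1 \ge 2|E|/n$ (Rayleigh quotient at the all-ones vector). With it the proof closes in two lines and you never need the Cauchy--Schwarz step on the negative eigenvalues: assuming $\lambda_k>0$ (else there is nothing to prove),
\[
\lambda_1^2+(k-1)\lambda_k^2 \;\le\; \sum_{i=1}^n\lambda_i^2 \;=\; 2|E| \;\le\; n\lambda_1,
\]
hence $(k-1)\lambda_k^2 \le \lambda_1(n-\lambda_1)\le n^2/4$ and $\lambda_k\le n/(2\sqrt{k-1})$. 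Your substitute $2|E|\le n(n-1)$ is precisely where the factor of $2$ is lost: feeding it into ``$\lambda_1$ treated separately'' still returns only $\lambda_k\lesssim n/\sqrt{k}$, because without $2|E|\le n\lambda_1$ there is no way to charge the edge count against $\lambda_1$.

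The lower bound plan contains a genuine gap. First, Friedman's theorem bounds the nontrivial eigenvalues of a random regular graph from \emph{above}; to force $\lambda_k$ close to $2\sqrt{d-1}$ from below you would need the Kesten--McKay law or Serre's theorem to produce $k$ eigenvalues near the top edge. But even granting that, the approach fails quantitatively: the limiting spectral density vanishes like $\sqrt{2\sqrt{d-1}-x}$ at the edge, so only a $\sim\delta^{3/2}$ fraction of eigenvalues lie within relative distance $\delta$ of it, forcing $m\gtrsim k\,\delta^{-3/2}$ vertices to capture $k$ of them; optimizing $\lambda_k(H)/m$ under this constraint (in either the sparse or the dense regime) leaves you a constant factor short of $1/(2\sqrt{k-1})$. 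A random-like graph simply cannot have $k$ eigenvalues essentially \emph{at} its spectral edge on only $O(\sqrt{dk})$ vertices. What is actually needed is a graph whose second-largest eigenvalue has multiplicity at least $k-1$ --- explicit strongly regular graphs with degree $\approx n/2$, positive nontrivial eigenvalue $\approx n/(2\sqrt{k-1})$ of multiplicity $k-1$, and all remaining eigenvalues equal to a small negative constant. Closed blowups of such graphs are exactly the constructions behind the bounds in Table~\ref{cktable}, and the $\sqrt[3]{k}$ term in the denominator reflects how densely the admissible parameter sets of these families sit among the integers, not a Ramanujan-type spectral deviation.
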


Nikiforov also showed that $c_k \ge \frac{1}{k - \frac12}$ for all $k \ge 5$, improving on the lower bound given by $kK_{\frac{n}{k}}$. On the other hand, $c_k = \frac{1}{k}$ for $k=1$ and $k=2$, leaving only the cases $k=3$ and $k=4$ open for the question in the beginning paragraph.

\begin{question}[Nikiforov~\cite{Nik1}]
Is $c_3 = \frac13$? Is $c_4 = \frac14$?
\end{question}

In this note, we answer half of Nikiforov's question, improving the lower bound on $c_4$. 

\begin{theorem}\label{thm:c4}
\[c_4 \ge \frac{1+\sqrt{5}}{12} \approx 0.26967.\]
\end{theorem}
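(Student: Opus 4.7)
The plan is to exhibit a family of graphs built as closed blowups of the icosahedral graph and compute $\lambda_4/n$ in the limit. Let $I$ be the icosahedral graph: a $5$-regular graph on $12$ vertices whose adjacency spectrum is
\[5^{(1)},\ \sqrt{5}^{(3)},\ (-1)^{(5)},\ (-\sqrt{5})^{(3)}.\]
In particular $\lambda_4(I)=\sqrt{5}$. For $s\ge 1$, let $I[s]$ denote the closed blowup of $I$ obtained by replacing each vertex by a clique $K_s$ and joining two cliques completely whenever the corresponding vertices of $I$ are adjacent. This graph has $12s$ vertices, and its adjacency matrix can be written as
\[A(I[s]) \;=\; (A(I)+I_{12})\otimes J_s \;-\; I_{12s},\]
where $J_s$ is the $s\times s$ all-ones matrix.

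The central step is to read off the spectrum of $I[s]$ from this tensor decomposition. Since $J_s$ has eigenvalues $s$ (once) and $0$ (with multiplicity $s-1$), each eigenvalue $\mu$ of $I$ contributes a single eigenvalue $s(\mu+1)-1$ to $A(I[s])$ via the eigenvector tensored with the all-ones vector, while the orthogonal complement contributes an eigenvalue of $-1$ with total multiplicity $12(s-1)$. Substituting the eigenvalues of $I$, the spectrum of $I[s]$ consists of
\[6s-1,\quad \bigl((\sqrt{5}+1)s-1\bigr)^{(3)},\quad (-1)^{(12s-7)},\quad \bigl((1-\sqrt{5})s-1\bigr)^{(3)}.\]
Since $(\sqrt{5}+1)s-1>-1$ for all $s\ge 1$, the three copies of $(\sqrt{5}+1)s-1$ sit in positions $\lambda_2,\lambda_3,\lambda_4$, so $\lambda_4(I[s])=(\sqrt{5}+1)s-1$.

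Finally, I would conclude by a limiting argument: dividing by $|V(I[s])|=12s$ gives
\[\frac{\lambda_4(I[s])}{12s} \;=\; \frac{\sqrt{5}+1}{12}-\frac{1}{12s},\]
which tends to $\frac{1+\sqrt{5}}{12}$ as $s\to\infty$. By definition of $c_4$ as a supremum, this yields $c_4\ge \frac{1+\sqrt{5}}{12}$.

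The only mildly delicate point is verifying the eigenvalue ordering, i.e. that the three copies of $(\sqrt{5}+1)s-1$ really do account for $\lambda_2,\lambda_3,\lambda_4$ and are not displaced by $6s-1$ (which is larger but simple) or by any of the $-1$'s (which are smaller). This is straightforward once the spectrum is written out, so the main content is truly the identification of the icosahedral graph as the right base graph: it is $5$-regular with relatively large $\lambda_4=\sqrt{5}$, and the quantity $\frac{\sqrt{5}+1}{12}=\frac{\lambda_4(I)+1}{|V(I)|}$ exceeds $\frac{1}{4}$, making it a genuine improvement on the trivial lower bound.
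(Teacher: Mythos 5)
Your proposal is correct and follows essentially the same route as the paper: closed blowups of the icosahedral graph, whose blowup spectrum gives $\lambda_4(I[s])=(\sqrt5+1)s-1$ on $12s$ vertices. The only difference is that you derive the blowup spectrum from the tensor decomposition $(A+I)\otimes J_s - I$, whereas the paper cites this fact from Nikiforov; the computation and conclusion are identical.
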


We can also improve the best known lower bound on $c_k$ for many other small values of $k$.

\begin{theorem}\label{thm:ck}
For $6\le k\le 16$,
\[c_k \ge \frac{2(k-3)}{k(k-1)}.\]
\end{theorem}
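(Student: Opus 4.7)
The plan is to take closed blowups of the triangular graph $T(k) = L(K_k) = J(k,2)$ and to let the blowup size tend to infinity. Recall that $T(k)$ is a strongly regular graph on $\binom{k}{2}$ vertices with parameters $\bigl(\binom{k}{2},\,2(k-2),\,k-2,\,4\bigr)$, from which one computes spectrum $\{2(k-2),\,k-4,\,-2\}$ with multiplicities $\{1,\,k-1,\,k(k-3)/2\}$. In particular the $k$-th largest eigenvalue of $T(k)$ equals $k-4$: after the Perron eigenvalue there are exactly $k-1$ further copies of $k-4$.

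The bare ratio $\lambda_k(T(k))/\binom{k}{2} = \frac{2(k-4)}{k(k-1)}$ falls one short in the numerator, so the idea is to pass to the closed blowup $G_t = T(k)[K_t]$, obtained by replacing each vertex of $T(k)$ by a clique of size $t$ and joining adjacent cliques completely. Its adjacency matrix equals $(A(T(k)) + I)\otimes J_t - I$, and a standard tensor-product calculation shows that each eigenvalue $\mu$ of $T(k)$ contributes an eigenvalue $(\mu+1)t - 1$ to $G_t$ with unchanged multiplicity, together with an additional eigenvalue $-1$ of multiplicity $(t-1)\binom{k}{2}$. For $k\ge 4$ and $t\ge 1$, the $k$-th largest eigenvalue of $G_t$ is therefore $(k-3)t-1$, while $|V(G_t)| = t\binom{k}{2}$.

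Dividing and letting $t\to\infty$ yields
\[
\lim_{t\to\infty}\frac{\lambda_k(G_t)}{|V(G_t)|} \;=\; \lim_{t\to\infty}\frac{(k-3)t-1}{t\binom{k}{2}} \;=\; \frac{k-3}{\binom{k}{2}} \;=\; \frac{2(k-3)}{k(k-1)},
\]
so Nikiforov's identity $c_k = \lim_{n\to\infty}\lambda_k(n)/n$ delivers the claimed bound. The only non-routine step is the spectral analysis of $G_t$: one must check that the $k-1$ copies of $(k-3)t-1$ really do occupy positions $2$ through $k$ in the decreasing order, i.e.\ that $(k-3)t-1$ strictly exceeds both $-t-1$ (from the blowup of the eigenvalue $-2$) and $-1$, which is immediate for $k\ge 4$ and $t\ge 1$. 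The interval $6\le k\le 16$ in the theorem statement is not intrinsic to the construction---which works for every $k\ge 4$---but simply records the range in which $\frac{2(k-3)}{k(k-1)}$ improves on the previously best known lower bounds on $c_k$.
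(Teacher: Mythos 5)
Your proposal is correct and follows essentially the same route as the paper: closed blowups of the Johnson graph $J(k,2)$ (the triangular graph $T(k)$), whose $k$th largest eigenvalue is $k-4$, giving $\lambda_k\bigl(J(k,2)^{[t]}\bigr) = (k-3)t-1$ and hence $c_k \ge \frac{2(k-3)}{k(k-1)}$ in the limit. Your added verification of the spectrum of $T(k)$ and of the eigenvalue ordering in the blowup is a correct elaboration of details the paper leaves to a citation.
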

The lower bound in Theorem~\ref{thm:ck} is in fact valid for all $k\ge 4$, but there are better bounds for $4\le k\le 5$ and $k\ge 17$. Furthermore, for sufficiently large values of $k$ the bound is much worse than the bound given by Theorem~\ref{thm:nik}. On the other hand, Theorem~\ref{thm:ck} also easily shows that $c_k > \frac{1}{k}$ for $k\ge 6$.

\section{Proofs of Theorems~\ref{thm:c4} and \ref{thm:ck} }

Our improved lower bounds are derived from constructions of closed blowups of explicit graphs. Recall that for an integer $t \ge 1$, the closed blowup $G^{[t]}$ of a graph $G$ is the graph obtained by replacing each vertex of $G$ with a $t$-clique and replacing each edge in $G$ with a complete bipartite graph $K_{t, t}$ on the vertices of the $t$-cliques. The eigenvalues of the closed blowup $G^{[t]}$ are $t\lambda_1 + t - 1$, $t\lambda_2 + t - 1$, \ldots, $t\lambda_n+t-1$, along with $(t-1)n$ additional $-1$s, where $\lambda_1 \ge \lambda_2 \ge \cdots \ge \lambda_n$ are the eigenvalues of $G$~\cite[Proposition 5.4]{Nik1}. 

\begin{proof}[Proof of Theorem~\ref{thm:c4}]
Let $G$ be the icosahedral graph. $G$ is a graph on $12$ vertices with spectrum $5^1(\sqrt{5})^3(-1)^5(-\sqrt{5})^3$\cite{Cve}. Therefore, the closed blowups of $G$ satisfy $\lambda_4(G^{[t]}) = t\sqrt{5} + t - 1$, so
\[c_4 \ge \sup_t \frac{\lambda_4(G^{[t]})}{12t} = \sup_t \frac{t\sqrt{5}+t - 1}{12t} = \frac{1+\sqrt{5}}{12}.\]
\end{proof}

\begin{proof}[Proof of Theorem~\ref{thm:ck}]
The Johnson graphs $J(k, 2)$ for $k\ge 4$ have $k$th largest eigenvalue $k-4$ (see \cite[Theorem 6.3.2]{GM}, for example, for the complete spectrum of Johnson graphs). Therefore, the closed blowups $J(k, 2)^{[t]}$ satisfy $\lambda_k(J(k, 2)^{[t]}) = t(k-4) + t - 1$, so 
\[c_k \ge \sup_t \frac{t(k-4) + t - 1}{t\binom{k}{2}} = \frac{2(k-3)}{k(k-1)}.\]
\end{proof}

\section{Concluding remarks}
Perhaps the most immediate open question stemming from the work presented here is to decide if $c_3 > \frac13$. We have been unable to find a construction of a graph $G$ with $\lambda_3 > \frac{n}{3}$. Besides the construction $3K_{\frac{n}{3}}$ mentioned in the beginning of the paper, other examples of graphs with $\lim_{n\rightarrow\infty}\frac{\lambda_3(G)}{n} = \frac13$ include the closed blowups of the $6$-cycle. 

One could also attempt to find better constructions which improve the lower bound on $c_k$ for other values of $k$. As an aid to researchers who might be interested in studying this question further, we conclude with a table of the best lower bound constructions that we know for small values of $k$. In all cases, the construction is a closed blowup of the graph or graphs listed. 

\section*{Acknowledgements}
I thank Clive Elphick for stimulating discussions which inspired this research. I thank the anonymous referee for helpful comments and in particular for making some improvements to Table 1. 

\begin{table}[h!]
\centering
\begin{tabular}{|c|c|c|}\hline
$k$ & $c_k \ge $ & Graph\\
\hline
4 & $\frac{1+\sqrt{5}}{12} \approx 0.26967$ & Icosahedral Graph\\
\hline
5 & $\frac29 \approx 0.2222$ & Paley graph on 9 vertices~\cite{Nik1}\\
\hline
6 & $\frac{1}{5} = 0.2 $ & Petersen graph~\cite{Nik1}, $J(6, 2)$, $J(6, 3)$, Line graph of Petersen graph\\
\hline
7 & $\frac{4}{21} \approx 0.190476$ & $J(7, 2)$\\
\hline
8 & $\frac{5}{28} \approx 0.178571$ & $J(8, 2)$, Gosset graph\\
\hline
9 & $\frac{1}{6} \approx 0.1666$ & $J(9, 2)$\\
\hline
10 & $\frac{7}{45} \approx 0.1555$ & $J(10, 2)$\\
\hline
11 & $\frac{8}{55} \approx 0.14545$ & $J(11, 2)$\\
\hline
12 & $\frac{3}{22} \approx 0.13636$ & $J(12, 2)$\\
\hline
13 & $\frac{5}{39} \approx 0.128205$ & $J(13, 2)$\\
\hline
14 & $\frac{11}{91} \approx 0.1208791$ & $J(14, 2)$\\
\hline
15 & $\frac{4}{35} \approx 0.1142857$ & $J(15, 2)$\\
\hline
16 & $\frac{13}{120} \approx 0.108333$ & $J(16, 2)$\\
\hline
17 & $\frac{2}{19} \approx 0.10526$ & $\text{srg}(57, 24, 11, 9)$\\
\hline
18 & $\frac{2}{19} \approx 0.10526$ & $\text{srg}(57, 24, 11, 9)$\\
\hline
19 & $\frac{2}{19} \approx 0.10526$ & $\text{srg}(57, 24, 11, 9)$\\
\hline
20 & $\frac{13}{125} = 0.104$ & $\text{srg}(125, 72, 45, 36)$\\
\hline
21 & $\frac{13}{125} = 0.104 $ & $\text{srg}(125, 72, 45, 36)$\\
\hline
22 & $\frac{13}{126} \approx 0.10317$ & $\text{srg}(126, 60, 33, 24)$\\
\hline
23 & $\frac{25}{243} \approx 0.10288$ & $\text{srg}(243, 132, 81, 60)$\\
\hline
24 & $\frac{56}{552} \approx 0.101449$ & Taylor graph from Conway group $\text{Co}_3$\\
\hline
\end{tabular}
\caption{Lower bounds for $c_k$}
\label{cktable}
\end{table}


\begin{thebibliography}{99}
\bibitem{Cve}D.~Cvetkovi\'{c}, M.~Doob, H.~Sachs, \emph{Spectra of Graphs: Theory and Applications}, New York: Wiley, 1998. 

\bibitem{GM}C.~Godsil, K.~Meagher, \emph{Erd\H{o}s-Ko-Rado Theorems: Algebraic Approaches}, Cambridge University Press, 2016. 

\bibitem{Hon}Y.~Hong, Bounds of eigenvalues of graphs, \emph{Discrete Math.} {\bf 123} (1993), 65--74. 

\bibitem{Nik1}V.~Nikiforov, Extrema of graph eigenvalues, \emph{Linear Algebra Appl.} {\bf 482} (2015), 158--190. 

\bibitem{Nik2}V.~Nikiforov, Linear combinations of graph eigenvalues, \emph{Electron J. Linear Algebra} {\bf 15} (2006), 329--336.

\end{thebibliography}
\end{document}